\newtheorem{theorem}{Theorem}[section]
\newtheorem{corollary}[theorem]{Corollary}
\newtheorem{lemma}[theorem]{Lemma}
\theoremstyle{definition}
\newtheorem{question}[theorem]{Question}
\numberwithin{equation}{section}
\DeclareMathOperator{\ran}{ran}
\DeclareMathOperator{\fin}{fin}
\DeclareMathOperator{\spn}{span}
\DeclareMathOperator{\Sub}{Sub}
\DeclareMathOperator{\cl}{cl}
\title{Amorphous sets and dual Dedekind finiteness}
\author{Yifan Hu}
\address{School of Philosophy\\
Wuhan University\\
No.~299 Bayi Road\\
Wuhan\\
Hubei Province 430072\\
People's Republic of China}
\email{djfrankyifanhu@outlook.com}
\author{Ruihuan Mao}
\address{School of Mathematical Sciences and LPMC\\
Nankai University\\
No.~94 Weijin Road\\
Tianjin 300071\\
People's Republic of China}
\email{rhmao2003@outlook.com}
\author{Guozhen Shen}
\address{Department of Philosophy (Zhuhai)\\
Sun Yat-sen University\\
No.~2 Daxue Road\\
Zhuhai\\
Guangdong Province 519082\\
People's Republic of China}
\email{shen\_guozhen@outlook.com}
\subjclass[2020]{Primary 03E35; Secondary 03E10, 03E25}
\keywords{dual Dedekind finiteness, amorphous set, strictly amorphous set, permutation model, axiom of choice}
\begin{document}

\begin{abstract}
A set $A$ is dually Dedekind finite if every surjection from $A$ onto $A$ is injective; otherwise, $A$ is dually Dedekind infinite. An amorphous set is an infinite set that cannot be partitioned into two infinite subsets. A strictly amorphous set is an amorphous set in which every partition has only finitely many non-singleton blocks. It is proved consistent with $\mathsf{ZF}$ (i.e., the Zermelo--Fraenkel set theory without the axiom of choice) that there exists an amorphous set $A$ whose power set $\mathscr{P}(A)$ is dually Dedekind infinite, which gives a negative solution to a question proposed by Truss [J. Truss, \emph{Fund. Math.} 84, 187--208 (1974)]. Nevertheless, we prove in $\mathsf{ZF}$ that, for all strictly amorphous sets $A$ and all natural numbers $n$, $\mathscr{P}(A)^n$ is dually Dedekind finite, which generalizes a result of Goldstern.
\end{abstract}

\maketitle

\section{Introduction}
In 1888, Dedekind~\cite{Dedekind1888} defined an infinite set as one that is equinumerous with a proper subset of itself.
Such sets are now referred to as \emph{Dedekind infinite}.
Sets that are not Dedekind infinite are called \emph{Dedekind finite}.
In the presence of the axiom of choice, a Dedekind finite set is the same as a finite set,
where a finite set is defined as one that is equinumerous with some natural number.
However, without the axiom of choice, there may exist a Dedekind-finite infinite set.

In 1958, Levy~\cite{Levy1958} investigated the relationships between various notions of finiteness in the absence of the axiom of choice.
An \emph{amorphous set} is an infinite set that cannot be partitioned into two infinite subsets.
Levy referred to amorphous sets and finite sets collectively as Ia-finite sets.

A set $A$ is \emph{dually Dedekind finite} if every surjection from $A$ onto $A$ is injective; otherwise, $A$ is \emph{dually Dedekind infinite}.
The notion of dual Dedekind finiteness was first introduced by Truss~\cite{Truss1974},
who used $\Delta_5$ to denote the class of cardinalities of all dually Dedekind finite sets.
It turns out that dual Dedekind finiteness is a particularly interesting notion of finiteness,
which has been extensively investigated in~\cite{Degen1994,Goldstern1997,Herrlich2015,Mao2025,Panasawatwong2025,Spivsiak1993}.

We use $\mathscr{P}(A)$ and $\fin(A)$ to denote, respectively, the power set and the set of all finite subsets of $A$.
Since the class of dually Dedekind finite sets is closed under finite unions (see~\cite[Theorem~1(iv)]{Truss1974}),
it follows that for any amorphous set $A$, $\mathscr{P}(A)$ is dually Dedekind finite if and only if $\fin(A)$ is
(see also~\cite[Theorem~2.15(3)]{Herrlich2015}).

In the paragraph preceding Theorem~4 of~\cite{Truss1974},
Truss proposed the conjecture that for all amorphous sets $A$, $\fin(A)$ is dually Dedekind finite.
This question was also proposed in \cite[Problem~4.1]{Spivsiak1993}, \cite[Question~1]{Herrlich2015} and \cite[Question~4.8]{Shen2021}.
In this article, we provide a negative solution to the above question.

\begin{theorem}\label{hmsmain}
It is consistent with $\mathsf{ZF}$ that there exists an amorphous set $A$
such that both $\mathscr{P}(A)$ and $\fin(A)$ are dually Dedekind infinite.
\end{theorem}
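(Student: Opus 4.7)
The plan is to prove the theorem by constructing a Fraenkel--Mostowski permutation model of $\mathsf{ZFA}$ in which the set $A$ of atoms is amorphous while $\fin(A)$ admits a non-injective surjection onto itself, and then transferring the conclusion to a model of $\mathsf{ZF}$ via the Jech--Sochor embedding theorem. The introduction already records that dual Dedekind finiteness is closed under finite unions, and consequently that $\mathscr{P}(A)$ is dually Dedekind finite if and only if $\fin(A)$ is, for any amorphous $A$; hence it suffices to arrange $\fin(A)$ to be dually Dedekind infinite, and the conclusion for $\mathscr{P}(A)$ follows.

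To set up the permutation model, the atoms must carry some nontrivial equivariant structure, because the companion result of the paper guarantees that $\fin(A)$ is dually Dedekind finite whenever $A$ is strictly amorphous. I would accordingly take $A$ to be a disjoint union of countably many blocks of fixed size $k\geq 2$, with symmetry group a suitable subgroup of the wreath product $S_k\wr S_\omega$ respecting the block decomposition, and the normal filter generated by finite supports. A routine argument then shows that every finitely supported subset of $A$ is, outside its support, a finite or cofinite union of whole blocks, so that $A$ is amorphous in the model; the equivariant block partition witnesses that $A$ is not strictly amorphous.

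The main technical step is to define a $G$-equivariant surjection $f\colon \fin(A)\to \fin(A)$ that fails to be injective. The $G$-orbits of $\fin(A)$ are parametrised by finite data recording how $X$ meets the block structure, and $G$-equivariance forces $f(X)$ to lie within a small family of configurations stabilised by $\mathrm{Stab}_G(X)$. One would then define $f$ piecewise on orbits, assigning to each orbit a pattern so that (i) the patterns collectively cover every output orbit and (ii) at least one orbit is mapped many-to-one. The main obstacle is precisely this orbit-level calibration: the stabiliser constraints are tight, and in the simplest block model they can force every equivariant surjection to be injective, so the argument will probably require enriching the permutation model---by taking larger $k$, adjoining a second compatible equivariant partition, or using a more intricate group action---to open up enough equivariant patterns for a non-injective surjection to exist. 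Once such an $f$ has been constructed, the Jech--Sochor transfer theorem delivers the desired consistency statement in $\mathsf{ZF}$.
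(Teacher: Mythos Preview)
Your overall framework is correct and matches the paper: build a permutation model in which the set $A$ of atoms is amorphous while $\fin(A)$ carries an equivariant non-injective surjection onto itself, then apply the Jech--Sochor embedding theorem. The gap lies in the choice of model and in the missing construction of the surjection.

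The paper does \emph{not} use a block (bounded amorphous) model. It takes $A$ to be an infinite-dimensional vector space over $\mathbb{F}_2$, with $\mathcal{G}$ the general linear group of $A$ and the filter of finite supports. Amorphousness follows because any two vectors outside $\spn(E)$ are conjugate by an element of $\mathcal{G}$ fixing $E$ pointwise. The crucial non-injective surjection $f$ on $\fin(A)$ is then written down explicitly from the linear structure: if $0\in S$, let $f(S)$ be $S$ minus the union of the linear subspaces of maximal order contained in $S$; if $0\notin S$, let $f(S)=S\cup\{0\}$. Non-injectivity is immediate (every finite subspace maps to $\varnothing$), and surjectivity is a short argument: given $T$ with $0\notin T$, adjoin to $T$ a subspace $U$ of dimension $|T|+1$ with $\spn(T)\cap U=\{0\}$; then $U$ is the unique maximal subspace inside $T\cup U$, so $f(T\cup U)=T$.

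Your proposed wreath-product model yields a \emph{bounded} amorphous set in Truss's sense, not one of projective type. You already sense the difficulty---the stabiliser constraints may well force every equivariant surjection on $\fin(A)$ to be injective---and in fact the paper leaves precisely this open: its final section asks whether every amorphous set with dually Dedekind infinite power set must be of projective type. If that question has a positive answer, no block model can work at all. So the ``orbit-level calibration'' you defer is not a detail to be filled in later; it is the entire content of the theorem, and there is at present no reason to believe it can be carried out in the model you propose.
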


Truss~\cite{Truss1995} defined a \emph{strictly amorphous set} as an amorphous set
in which every partition has only finitely many non-singleton blocks.
Goldstern~\cite{Goldstern1997} defined a \emph{strongly amorphous set} as an infinite set $A$ such that, for every positive integer $k$,
every $k$-ary relation on $A$ is first-order definable from finitely many parameters in the language of equality.
He also proved that for every strongly amorphous set $A$, $\mathscr{P}(A)$ is dually Dedekind finite.
It is easy to see that every strongly amorphous set is strictly amorphous.
In this article, using ideas in~\cite{Truss1995}, we show that the converse also holds.
Furthermore, we extend Goldstern's result by proving that, for every strictly amorphous set $A$ and every natural number $n$,
$\mathscr{P}(A)^n$ is dually Dedekind finite. This result also provides a partial answer to \cite[Question~4.8]{Shen2021}.

\section{An amorphous set with a dually Dedekind infinite power set}
We shall employ the method of permutation models.
We refer the reader to~\cite[Chap.~8]{Halbeisen2025} or~\cite[Chap.~4]{Jech1973}
for an introduction to the theory of permutation models.
Permutation models are not models of $\mathsf{ZF}$;
they are models of $\mathsf{ZFA}$ (the Zermelo--Fraenkel set theory with atoms).
We shall construct a permutation model in which the atoms form an amorphous set whose power set is dually Dedekind infinite.
Then, by the Jech--Sochor embedding theorem (see~\cite[Theorem~17.1]{Halbeisen2025} or~\cite[Theorem~6.1]{Jech1973}),
we obtain Theorem~\ref{hmsmain}.

We take the set of atoms to be
\[
A=\{a_v\mid v\in V\},
\]
where $V$ is an infinite vector space over the field $\mathbb{F}_2$.
For simplicity, we identify each $a_v$ with $v$, and hence identify $A$ with $V$.
Let $\mathcal{G}$ be the general linear group of $A$; that is, the group of all invertible linear transformations of $A$.

Every permutation $\pi$ of $A$ extents recursively to a permutation of the universe by
\[
\pi x=\{\pi z\mid z\in x\}.
\]
Then $x$ belongs to the permutation model $\mathcal{V}$ determined by $\mathcal{G}$ if and only if
$x\subseteq\mathcal{V}$ and $x$ has a \emph{finite support}, that is, a finite subset $E\subseteq A$
such that every permutation $\pi\in\mathcal{G}$ fixing $E$ pointwise also fixes $x$.

For $S\subseteq A$, we write $\spn(S)$ for the linear span of $S$.
We also write $0$ for the zero vector and $\Sub(A)$ for the set of all linear subspaces of $A$.

\begin{lemma}\label{sh21}
In $\mathcal{V}$, $A$ is amorphous.
\end{lemma}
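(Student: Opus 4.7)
The plan is to first note that $A$ is infinite in $\mathcal{V}$, which is immediate because $V$ is an infinite vector space over $\mathbb{F}_2$ and is therefore infinite-dimensional. The substance of the lemma is that $A$ admits no partition into two infinite pieces inside $\mathcal{V}$. I would argue by contradiction: suppose $A = B \cup C$ is a disjoint partition with $B, C \in \mathcal{V}$ both infinite. Choose a finite support $E \subseteq A$ of $B$ and set $W = \spn(E)$, which is a finite-dimensional, and hence finite, subspace of $V$. Any $\pi \in \mathcal{G}$ that pointwise fixes $E$ is linear and therefore automatically fixes $W$ pointwise as well, and by the choice of $E$ it fixes the set $B$.

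The heart of the argument is the following homogeneity claim: for any $v_1, v_2 \in A \setminus W$, there exists $\pi \in \mathcal{G}$ with $\pi\restriction W = \mathrm{id}_W$ and $\pi(v_1) = v_2$. Granted this, $B \setminus W$ is stable under the subgroup of $\mathcal{G}$ fixing $W$ pointwise, so by transitivity it is either empty or equal to $A \setminus W$. In the first case $B \subseteq W$, in the second $C \subseteq W$; either way one of $B, C$ sits inside the finite set $W$, contradicting the assumption that both are infinite.

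To establish the homogeneity claim I would pick a basis $B_0$ of $W$, extend $B_0 \cup \{v_1\}$ to a basis $B_1$ of $V$ (possible because $v_1 \notin W$), extend $B_0 \cup \{v_2\}$ to a basis $B_2$ of $V$, and use that any two bases of $V$ have the same cardinality to fix a bijection between $B_1 \setminus (B_0 \cup \{v_1\})$ and $B_2 \setminus (B_0 \cup \{v_2\})$ in the ground model. Define $\pi$ on $B_1$ by fixing $B_0$ pointwise, sending $v_1 \mapsto v_2$, and matching the complements via the bijection, and then extend $\mathbb{F}_2$-linearly to all of $V$; this produces an element of $\mathcal{G}$ with the required properties. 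I do not anticipate a genuine obstacle here: the permutation model $\mathcal{V}$ itself need not satisfy choice, but $\mathcal{G}$ and the whole construction of $\pi$ take place in the ground model, where AC is available to pick the bases and the bijection. The only thing to be slightly careful about is that when $v_1 + v_2 \in W$ the vectors $v_1, v_2$ are no longer linearly independent modulo $W$, but this is irrelevant: $v_2 \notin W$ is all that is needed for $B_0 \cup \{v_2\}$ to extend to a basis.
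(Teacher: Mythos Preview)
Your proof is correct and follows essentially the same approach as the paper: take a finite support $E$ of an arbitrary subset $B$, use transitivity of the pointwise stabilizer of $\spn(E)$ on $A\setminus\spn(E)$ to conclude that $B$ or its complement lies in the finite set $\spn(E)$. The paper states the transitivity claim without details, whereas you spell out the basis-extension argument; otherwise the arguments are the same.
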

\begin{proof}
Let $B\in\mathcal{V}$ be a subset of $A$ and let $E\subseteq A$ be a finite support of $B$.
Clearly, for any distinct $u,v\in A\setminus\spn(E)$, there exists a $\pi\in\mathcal{G}$ that fixes $\spn(E)$ pointwise and moves $u$ to $v$,
and thus $u\in B$ if and only if $v\in B$, since $E$ supports $B$. Hence, $B\subseteq\spn(E)$ or $A\setminus B\subseteq\spn(E)$,
which implies that $B$ is finite or cofinite.
\end{proof}

\begin{lemma}\label{sh22}
In $\mathcal{V}$, $\fin(A)$, and consequently $\mathscr{P}(A)$, is dually Dedekind infinite.
\end{lemma}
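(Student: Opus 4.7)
The plan is to exhibit a $\mathcal G$-equivariant surjection $g\colon\fin(A)\to\fin(A)$ that is not injective. The idea is to exploit the $\mathbb F_2$-vector-space structure on $A=V$ preserved by $\mathcal G$: the span map $F\mapsto\spn(F)$ is a $\mathcal G$-equivariant self-map of $\fin(A)$ and has empty support, so the same is true of the candidate
\[
g(F)\;:=\;\spn(F)\setminus F,
\]
which therefore lies in $\mathcal V$.

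Non-injectivity I will check first, since it is almost a triviality: $\spn(\emptyset)=\{0\}$ gives $g(\emptyset)=\{0\}$, while for any $v\in A\setminus\{0\}$ the equality $\spn(\{v\})=\{0,v\}$ forces $g(\{v\})=\{0\}$ as well, so $g$ collapses $\emptyset$ together with every nonzero singleton.

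The main work will be surjectivity. Given a target $F'\in\fin(A)$, I will pick some $w\in A\setminus\spn(F')$, which exists because $V$ is infinite while $\spn(F')$ is finite, and set $W:=\spn(F'\cup\{w\})$ and $F:=W\setminus F'$. Then $F\in\fin(A)$ and $g(F)=W\setminus F=F'$ provided one can show $\spn(F)=W$. The critical computation is this span identity: since $w\notin\spn(F')$, the coset $w+\spn(F')$ is disjoint from $\spn(F')$, hence from $F'$, so it is entirely contained in $F$; in particular $w\in F$ and $v+w\in F$ for every $v\in\spn(F')$, which forces $v=w+(v+w)\in\spn(F)$. Thus $\spn(F)\supseteq\spn(F')\cup\{w\}$, whence $\spn(F)=W$.

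With a non-injective surjection of $\fin(A)$ onto itself in hand, the conclusion for $\mathscr P(A)$ is then immediate from the observation recorded in the introduction: by Truss's result that the class of dually Dedekind finite sets is closed under finite unions, applied to the partition $\mathscr P(A)=\fin(A)\cup\{A\setminus E:E\in\fin(A)\}$ delivered by Lemma~\ref{sh21}. I expect the span identity above to be the only step that needs careful writing up; everything else is immediate from the definitions and the linearity of $\mathcal G$.
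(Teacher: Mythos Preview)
Your proof is correct and genuinely simpler than the paper's. The paper defines
\[
f(S)=\begin{cases}S\setminus\bigcup\{W\in\Sub(A)\mid W\subseteq S\text{ with }|W|\text{ maximal}\}&\text{if }0\in S,\\ S\cup\{0\}&\text{otherwise,}\end{cases}
\]
so there is a case split, and the surjectivity argument requires building, for a given target $T$ with $0\notin T$, an auxiliary subspace $U$ of dimension $|T|+1$ with $\spn(T)\cap U=\{0\}$ and checking that every subspace of $T\cup U$ of size at least $|U|$ must equal $U$. Your map $g(F)=\spn(F)\setminus F$ avoids the case distinction entirely, and your surjectivity argument needs only a single generic vector $w$ together with the $\mathbb F_2$-specific identity $v=w+(w+v)$ to recover $\spn(F')$ inside $\spn(F)$. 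What the paper's construction buys is that it is visibly the vector-space specialisation of the function used later in Theorem~\ref{sh26} for arbitrary amorphous sets of projective type, where one only has a pregeometry closure $\cl$ and the trick $v=w+(w+v)$ is unavailable; your cleaner argument does not obviously survive that generalisation. One small remark on the last paragraph: the direction you actually need is ``$\fin(A)$ dually Dedekind infinite $\Rightarrow$ $\mathscr P(A)$ dually Dedekind infinite'', which is not literally the closure-under-finite-unions statement but rather its companion observation that a non-injective surjection of $\fin(A)$ extends to one of $\mathscr P(A)\approx 2\times\fin(A)$ by acting as the identity on the cofinite part; you are citing the right equivalence from the introduction, just be careful about which half you are using.
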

\begin{proof}
In $\mathcal{V}$, we construct a non-injective surjection $f:\fin(A)\to\fin(A)$ as follows.
For each $S\in\fin(A)$, define
\[
f(S)=
\begin{cases}
S\setminus\bigcup\{W\in\Sub(A)\mid W\subseteq S\text{ with }|W|\text{ maximal}\} & \text{if $0\in S$,}\\
S\cup\{0\}                                                                       & \text{otherwise.}
\end{cases}
\]
Since every $\pi\in\mathcal{G}$ is a linear transformation, it follows that every $\pi\in\mathcal{G}$ fixes $f$,
which implies $f\in\mathcal{V}$. $f$ is not injective since $f(W)=\varnothing$ for all finite $W\in\Sub(A)$.

We show that $f$ is surjective as follows. Take an arbitrary $T\in\fin(A)$.
If $0\in T$, then $T=f(T\setminus\{0\})\in\ran(f)$.
Suppose $0\notin T$. We need to construct an $S\in\fin(A)$ such that $f(S)=T$.
Let $n=|T|$. We define by recursion a sequence $\langle u_i\rangle_{i\leqslant n}$ of vectors as follows.
Let $j\leqslant n$ and assume that $\langle u_i\rangle_{i<j}$ has already been defined.
Since $A$ is infinite, we can choose
\[
u_j\in A\setminus\spn(T\cup\{u_i\mid i<j\}).
\]
Let $U=\spn(\{u_i\mid i\leqslant n\})$ and let $S=T\cup U$.
By construction, it is easy to see that $\spn(T)\cap U=\{0\}$,
which implies that for every $W\in\Sub(A)$ with $W\subseteq S$, either $W\subseteq T\cup\{0\}$ or $W\subseteq U$.
Since $|T\cup\{0\}|=n+1<2^{n+1}=|U|$, it follows from the definition of $f$ that $f(S)=T$.
\end{proof}

Now, Theorem~\ref{hmsmain} follows immediately from Lemmas~\ref{sh21} and~\ref{sh22} together with the Jech--Sochor embedding theorem.

\section{Amorphous sets of projective type and dual Dedekind finiteness}
In this section, we extend the result of the previous section by showing that
every amorphous set of projective type has a dually Dedekind infinite power set.
We first recall the definitions (see~\cite[p.~198]{Truss1995} and~\cite[Section~4.6]{Hodges1993}).

A \emph{pregeometry} on a set $A$ is a function $\cl:\fin(A)\to\fin(A)$ satisfying the following conditions:
\begin{enumerate}[label=\upshape(\arabic*)]
  \item $\cl$ is a closure operator; that is, for all $S,T\in\fin(A)$, $S\subseteq\cl(S)=\cl(\cl(S))$,
        and if $S\subseteq T$ then $\cl(S)\subseteq\cl(T)$. $S\in\fin(A)$ is called \emph{closed} if $\cl(S)=S$.
  \item $\cl$ has the exchange property; that is, for all $S\in\fin(A)$ and $a,b\in A\setminus\cl(S)$,
        $a\in\cl(S\cup\{b\})$ if and only if $b\in\cl(S\cup\{a\})$.
  \item $\cl$ is locally homogeneous; that is, for all closed $S,T\in\fin(A)$ with $S\subseteq T$ and all $a,b\in T\setminus S$,
        there is a permutation $\pi$ of $T$ that preserves $\cl$ on $\fin(T)$, fixes $S$ pointwise, takes $a$ to $b$,
        and is such that, for all closed $U\in\fin(A)$ with $T\subseteq U$, $\pi$ can be extended to $U$ so as to preserve $\cl$ on $\fin(U)$.
\end{enumerate}
If in addition $\cl(\varnothing)=\varnothing$ and $\cl(\{a\})=\{a\}$ for all $a\in A$, then the pregeometry is called a \emph{geometry}.
We say that $\cl$ is \emph{degenerate} if $\cl(S)=\bigcup_{a\in S}\cl(\{a\})$ for all non-empty $S\in\fin(A)$.
An amorphous set is said to be of \emph{projective type} if it admits a non-degenerate pregeometry.
Note that if $A$ is an infinite vector space over a finite field,
then $\spn:\fin(A)\to\fin(A)$ is a non-degenerate pregeometry on $A$.

Let $S,T\in\fin(A)$. $S$ is \emph{independent over} $T$ if $a\notin\cl(T\cup(S\setminus\{a\}))$ for all $a\in S$.
$S$ is \emph{independent} if it is independent over $\varnothing$.
By the exchange property, $\{a_i\mid i<n\}$ is independent over $T$
if and only if $a_j\notin\cl(T\cup\{a_i\mid i<j\})$ for all $j<n$.

\begin{lemma}\label{sh23}
Let $\cl$ be a pregeometry on an infinite set $A$ and let $S,T\in\fin(A)$ be independent.
If $|S|=|T|$, then $|\cl(S)|=|\cl(T)|$.
\end{lemma}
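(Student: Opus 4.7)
The plan is to construct a single $\cl$-preserving permutation of a closed finite set containing both $\cl(S)$ and $\cl(T)$ that carries $S$ onto $T$; the restriction of such a permutation will be the desired bijection between $\cl(S)$ and $\cl(T)$. Fix $U=\cl(S\cup T)$, which lies in $\fin(A)$, is closed, and contains $\cl(S)\cup\cl(T)$. Enumerate $S=\{s_1,\dots,s_n\}$ and $T=\{t_1,\dots,t_n\}$, and build recursively permutations $\pi_0=\mathrm{id}_U,\pi_1,\dots,\pi_n$ of $U$, each preserving $\cl$ on $\fin(U)$, with $\pi_k(s_i)=t_i$ for every $i\leqslant k$.

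Given $\pi_{k-1}$, let $C_{k-1}=\cl(\{t_1,\dots,t_{k-1}\})\subseteq U$, a closed subset of $U$. Applying local homogeneity to the closed pair $C_{k-1}\subseteq U$ at the points $\pi_{k-1}(s_k),t_k\in U\setminus C_{k-1}$ yields a permutation $\sigma_k$ of $U$ that preserves $\cl$ on $\fin(U)$, fixes $C_{k-1}$ pointwise, and sends $\pi_{k-1}(s_k)$ to $t_k$. Set $\pi_k=\sigma_k\circ\pi_{k-1}$. The required properties are immediate: preservation of $\cl$ is closed under composition; for $i<k$, $\pi_k(s_i)=\sigma_k(t_i)=t_i$ because $t_i\in C_{k-1}$; and $\pi_k(s_k)=t_k$ by construction. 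Once $\pi_n$ is built, $\pi_n(S)=T$, so $\pi_n(\cl(S))=\cl(\pi_n(S))=\cl(T)$, and the restriction of $\pi_n$ to $\cl(S)$ is the desired bijection.

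The main obstacle I anticipate is verifying that $\pi_{k-1}(s_k)$ really does lie outside $C_{k-1}$, so that local homogeneity can be invoked at every stage. For $t_k$ the corresponding claim is just the independence of $T$. For $\pi_{k-1}(s_k)$, one uses that a permutation preserving $\cl$ on $\fin(U)$ must send independent subsets of $U$ to independent subsets, since independence is defined directly from $\cl$; applying this to $S\subseteq U$ shows that $\pi_{k-1}(S)=\{t_1,\dots,t_{k-1},\pi_{k-1}(s_k),\dots,\pi_{k-1}(s_n)\}$ is independent, whence $\pi_{k-1}(s_k)\notin\cl(\{t_1,\dots,t_{k-1}\})=C_{k-1}$.
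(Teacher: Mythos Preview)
Your proof is correct and follows essentially the same idea as the paper's: both exchange one basis element at a time inside $\cl(S\cup T)$ via local homogeneity. The only difference is that the paper first reduces without loss of generality to the case where $S\cup T$ is independent (so that the hypothesis of local homogeneity is immediate at each step), whereas you avoid this reduction by noting that $\cl$-preserving permutations send independent sets to independent sets.
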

\begin{proof}
Without loss of generality, assume that $S\cap T=\varnothing$ and $S\cup T$ is independent.
Suppose $S=\{a_i\mid i<n\}$ and $T=\{b_i\mid i<n\}$.
We prove by induction on $j\leqslant n$ that $|\cl(S)|=|\cl(\{a_i\mid i<n-j\}\cup\{b_i\mid n-j\leqslant i<n\})|$ as follows.
The base case of the induction is trivial. Assume the conclusion holds for $j<n$. To establish the step for $j+1$,
it suffices to show that
\begin{equation}\label{sh24}
|\cl(X\cup\{a_{n-j-1}\})|=|\cl(X\cup\{b_{n-j-1}\})|,
\end{equation}
where
\[
X=\{a_i\mid i<n-j-1\}\cup\{b_i\mid n-j\leqslant i<n\}.
\]
Since $a_{n-j-1},b_{n-j-1}\in\cl(S\cup T)\setminus\cl(X)$,
by the local homogeneity, there is a permutation $\pi$ of $\cl(S\cup T)$ that preserves $\cl$ on $\fin(\cl(S\cup T))$,
fixes $\cl(X)$ pointwise, and takes $a_{n-j-1}$ to $b_{n-j-1}$.
Since $\pi[X\cup\{a_{n-j-1}\}]=X\cup\{b_{n-j-1}\}$, it follows that $\pi[\cl(X\cup\{a_{n-j-1}\})]=\cl(X\cup\{b_{n-j-1}\})$,
from which \eqref{sh24} follows. Finally, by setting $j=n$, we obtain $|\cl(S)|=|\cl(T)|$.
\end{proof}

\begin{theorem}\label{sh26}
For every amorphous set $A$ of projective type, $\fin(A)$, and consequently $\mathscr{P}(A)$, is dually Dedekind infinite.
\end{theorem}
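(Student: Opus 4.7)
The plan is to imitate Lemma~\ref{sh22}, replacing $\spn$ by the closure operator $\cl$ and the zero vector by $Z:=\cl(\varnothing)$. Concretely, I would define $f:\fin(A)\to\fin(A)$ by
\[
f(S)=\begin{cases} S\setminus\bigcup\{W\subseteq S : W\text{ closed},|W|\text{ maximal}\} & \text{if }Z\subseteq S,\\ S\cup Z & \text{otherwise,}\end{cases}
\]
with the modification that when $Z=\varnothing$ one uses $\cl(\{a_0\})$ for a fixed $a_0\in A$ in place of $Z$. The first task is to verify non-injectivity: every closed $W\subseteq A$ contains $Z$ and is its own unique maximal closed subset, so $f(W)=\varnothing$, and non-degeneracy produces infinitely many distinct closed sets.

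For surjectivity I would split on $T\in\fin(A)$. If $Z\subseteq T$, take $S=T\setminus Z$; then $Z\not\subseteq S$ and $f(S)=T$. If $Z\cap T=\varnothing$, I would mimic the vector space construction: using infiniteness of $A$, recursively pick $u_j\in A\setminus\cl(T\cup\{u_i\mid i<j\})$ for $1\leqslant j\leqslant m$, set $U=\cl(\{u_i\mid 1\leqslant i\leqslant m\})$, and put $S=T\cup U$. The aim is that for $m$ sufficiently large, $U$ is the unique maximal closed subset of $S$ with $U\cap T=\varnothing$, yielding $f(S)=S\setminus U=T$.

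The main obstacle is transplanting the dichotomy at the end of Lemma~\ref{sh22}: every closed $W\subseteq S$ is contained in $T\cup Z$ or in $U$. The vector space argument used linearity via $w_1+w_2\in W$; the pregeometric substitute needs the disjointness $T\cap U=\varnothing$ and the modularity-type identity $\cl(T)\cap U=Z$, both established by applying the exchange property to minimal-length witnesses and the choice of the $u_j$. Granted these, suppose a closed $W\subseteq S$ has $w_1\in W\cap T\setminus Z$ and $w_2\in W\cap U\setminus Z$. Then $\{w_1,w_2\}$ is independent, since $w_1\in T$ avoids $\cl(\{w_2\})\subseteq U$ and $w_2\in U\setminus Z$ avoids $\cl(\{w_1\})\subseteq\cl(T)$. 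By non-degeneracy and local homogeneity there is some $x\in\cl(\{w_1,w_2\})\setminus(\cl(\{w_1\})\cup\cl(\{w_2\}))$ lying in $W\subseteq T\cup U$. Case analysis on whether $x\in T$ or $x\in U$, combined with exchange, yields either $w_2\in\cl(T)\cap U=Z$ or $w_1\in U\cap T=\varnothing$, each contradicting the choice of $w_i$.

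Once the dichotomy is in place, any closed $W\subseteq T\cup Z$ has $|W|\leqslant|T|+|Z|$, a bound independent of $m$; and by Lemma~\ref{sh23}, $|U|$ depends only on the rank $m$ and grows strictly with $m$ (using non-degeneracy). Choosing $m$ large enough forces $|U|>|T|+|Z|$, making $U$ the unique maximum, and $f(S)=T$ follows. The dual Dedekind infiniteness of $\mathscr{P}(A)$ is then immediate from the observation in the introduction that $\mathscr{P}(A)$ and $\fin(A)$ are simultaneously dually Dedekind finite or infinite for amorphous $A$.
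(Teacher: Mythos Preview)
Your approach mirrors the paper's overall strategy, but the dichotomy step contains a real gap. You assert that for an independent pair $\{w_1,w_2\}$, non-degeneracy and local homogeneity yield some $x\in\cl(\{w_1,w_2\})\setminus(\cl(\{w_1\})\cup\cl(\{w_2\}))$. This is not justified: non-degeneracy only says that \emph{some} finite $E$ satisfies $\cl(E)\neq\bigcup_{a\in E}\cl(\{a\})$, and if the minimal such $E$ has $|E|\geqslant3$, then by minimality every two-element set is degenerate, i.e.\ $\cl(\{a,b\})=\cl(\{a\})\cup\cl(\{b\})$ for all independent $\{a,b\}$. Local homogeneity (and Lemma~\ref{sh23}) then transports this equality to $\{w_1,w_2\}$ rather than contradicting it, so no such $x$ exists and your contradiction at the end of the dichotomy argument evaporates.

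The paper repairs exactly this point. It fixes a minimal independent witness $E$ to non-degeneracy, removes two points to obtain $D$, and replaces $\cl$ throughout by the relative closure $S\mapsto\cl(D\cup S)$: the relevant ``closed'' sets become $\mathcal{C}=\{W:\cl(D\cup W)=W\}$, the role of $Z$ is played by $\cl(D)$, and the function is defined to be the identity on sets meeting $\cl(D)$. With this shift, any two elements $b,c$ independent over $D$ give $X=D\cup\{b,c\}$ with $|X|=|E|$, so by \eqref{sh25} and the minimality of $|E|$ there is $d\in\cl(X)$ lying outside both $\cl(D\cup\{b\})$ and $\cl(D\cup\{c\})$---precisely the ``third point'' your argument needed. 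Your proof is essentially the special case $D=\varnothing$, valid only when $|E|=2$; to cover all amorphous sets of projective type you must pass to closure over $D$. (Your ad hoc replacement of $Z$ by $\cl(\{a_0\})$ when $Z=\varnothing$ is also not quite right for surjectivity when $|\cl(\{a_0\})|>1$; the paper's device of setting $f(S)=S$ whenever $S$ meets $\cl(D)$ sidesteps this issue cleanly.)
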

\begin{proof}
Let $\cl$ be a non-degenerate pregeometry on $A$.
Then there exists a non-empty $E\in\fin(A)$ with $|E|$ minimal such that $\cl(E)\neq\bigcup_{a\in E}\cl(\{a\})$.
Then $E$ must be independent. We claim that for every independent $X\in\fin(A)$ with $|X|=|E|$,
\begin{equation}\label{sh25}
\cl(X)\neq\bigcup_{a\in X}\cl(\{a\}).
\end{equation}
In fact, since $E$ is independent, it follows that $E\subseteq A\setminus\cl(\varnothing)$.
By the exchange property, the sets $\cl(\{a\})\setminus\cl(\varnothing)$ ($a\in E$) are pairwise disjoint. Hence,
\[
|\cl(\varnothing)|+\sum_{a\in E}|\cl(\{a\})\setminus\cl(\varnothing)|=\left|\bigcup_{a\in E}\cl(\{a\})\right|<|\cl(E)|.
\]
For similar reasons, it follows from Lemma~\ref{sh23} that
\begin{align*}
\left|\bigcup_{a\in X}\cl(\{a\})\right|
& =|\cl(\varnothing)|+\sum_{a\in X}|\cl(\{a\})\setminus\cl(\varnothing)| \\
& =|\cl(\varnothing)|+\sum_{a\in E}|\cl(\{a\})\setminus\cl(\varnothing)|<|\cl(E)|=|\cl(X)|,
\end{align*}
and hence \eqref{sh25} follows.

Clearly, $|E|\geqslant2$. Let $D$ be a subset of $E$ obtained by removing two points.
Let $\mathcal{C}=\{W\in\fin(A)\mid\cl(D\cup W)=W\}$.
We construct a non-injective surjection $f:\fin(A)\to\fin(A)$ as follows. For each $S\in\fin(A)$, define
\[
f(S)=
\begin{cases}
S\setminus\bigcup\{W\in\mathcal{C}\mid W\setminus\cl(D)\subseteq S\text{ with }|W|\text{ maximal}\} & \text{if $S\cap\cl(D)=\varnothing$,}\\
S & \text{otherwise.}
\end{cases}
\]
Note that $f$ is not injective since $f(W\setminus\cl(D))=\varnothing$ for all $W\in\mathcal{C}$.

We show that $f$ is surjective as follows. Let $T\in\fin(A)$. If $T\cap\cl(D)\neq\varnothing$, then $T=f(T)\in\ran(f)$.
Suppose $T\cap\cl(D)=\varnothing$. We need to construct an $S\in\fin(A)$ such that $f(S)=T$.
Let $n=|T|$. We define by recursion a sequence $\langle a_i\rangle_{i\leqslant n}$ of elements of $A$ as follows.
Let $j\leqslant n$ and assume that $\langle a_i\rangle_{i<j}$ has already been defined. Since $A$ is infinite, we can choose
\[
a_j\in A\setminus\cl(D\cup T\cup\{a_i\mid i<j\}).
\]
Then $\{a_i\mid i\leqslant n\}$ is independent over $D\cup T$.
Let $U=\cl(D\cup\{a_i\mid i\leqslant n\})$ and let $S=T\cup(U\setminus\cl(D))$.
It is easy to see that $\cl(D\cup T)\cap U=\cl(D)$.

We conclude the proof by showing $f(S)=T$. By definition of $f$, it suffices to show that whenever
$W\in\mathcal{C}$ satisfies $W\setminus\cl(D)\subseteq S$ and $|W|\geqslant|U|$, we have $W=U$.
Assume towards a contradiction that there exists $W\in\mathcal{C}$ such that $W\setminus\cl(D)\subseteq S$,
$|W|\geqslant|U|$, and $W\neq U$. Then $W\subseteq T\cup U$, and therefore $W\cap T\neq\varnothing$.
Since $|W|\geqslant|U|\geqslant|\cl(D)|+n+1>|\cl(D)\cup T|$, it follows that $W\cap(U\setminus\cl(D))\neq\varnothing$.
Let $b\in W\cap T$ and let $c\in W\cap(U\setminus\cl(D))$.
Let $X=D\cup\{b,c\}$. $X$ is independent and $|X|=|E|$.
By \eqref{sh25}, there exists $d\in\cl(X)\setminus\bigcup_{a\in X}\cl(\{a\})$.
By the minimality of $|E|$, $d\notin\cl(D\cup\{b\})$ and $d\notin\cl(D\cup\{c\})$.
Now, if $d\in T$, then by the exchange property we have $c\in\cl(D\cup T)$;
and if $d\in U$, then by the exchange property we have $b\in U$.
In either case, we obtain a contradiction to $\cl(D\cup T)\cap U=\cl(D)$.
\end{proof}

\section{Strictly amorphous sets and strongly amorphous sets}
In this section, we show that strictly amorphous sets and strongly amorphous sets coincide.
Let $\mathscr{L}_{\doteq}$ denote the first-order language of equality.
For each set $E$, let $\mathscr{L}_{\doteq}^E$ be the extension of $\mathscr{L}_{\doteq}$
obtained by adding constant symbols $\dot{e}$ for each $e\in E$.

\begin{lemma}\label{sh31}
Let $A$ be a strictly amorphous set. For every positive integer $k$ and every $R\subseteq A^k$,
there is a finite subset $E_R$ of $A$ such that for some quantifier-free
$\mathscr{L}_{\doteq}^{E_R}$-formula $\phi(x_1,\dots,x_k)$ we have
\[
R=\bigl\{\langle a_1,\dots,a_k\rangle\in A^k\bigm|A\models\phi[a_1,\dots,a_k]\bigr\}.
\]
\end{lemma}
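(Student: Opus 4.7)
Proof plan. The plan is to proceed by induction on $k$.

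Base case $k=1$: by amorphousness, $R$ is finite or cofinite, so one takes $E_R=R$ (resp.\ $E_R=A\setminus R$), with $\phi$ the corresponding Boolean combination of (in)equalities.

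For the inductive step, suppose the lemma holds for $k$ and let $R\subseteq A^{k+1}$. First decompose $A^{k+1}$ into the finitely many strata determined by the equality pattern of coordinates; each stratum is in natural bijection with $A^{(m)}$ for some $m\leqslant k+1$, and the strata with $m\leqslant k$ reduce directly to the inductive hypothesis. It remains to treat the injective stratum $R'=R\cap A^{(k+1)}$.

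For $R'$, I would introduce the relation $\equiv_{R'}$ on $A$ defined by: $a\equiv_{R'}a'$ iff the transposition $\pi_{a,a'}$, acting diagonally on $A^{k+1}$, fixes $R'$. This is an equivalence relation (transitivity follows from the identity $\pi_{a,a''}=\pi_{a,a'}\pi_{a',a''}\pi_{a,a'}$). By strict amorphousness, only finitely many $\equiv_{R'}$-classes are non-singleton, and by amorphousness at most one is infinite. If some class $C$ is cofinite, then $\mathrm{Stab}(R')$ contains every transposition within $C$, hence contains the subgroup of $\mathrm{Sym}(A)$ that fixes $A\setminus C$ pointwise; this provides the finite support $E_0=A\setminus C$ for $R'$, and enumerating the finitely many orbits of this subgroup contained in $R'$ assembles the required quantifier-free formula.

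The main obstacle is ruling out the remaining case, in which every $\equiv_{R'}$-class is finite. Here my plan is to invoke the rigidity dichotomy that strict amorphousness forces on functions out of $A$: since the fiber partition of any such function has only finitely many non-singleton blocks, every function from $A$ into any set is either constant or injective on a cofinite subset of $A$. Applied to the slice map $a\mapsto R'_a$ (whose values have finite support by the inductive hypothesis, and thus a finite ``signature''), the constant branch already produces a cofinite $\equiv_{R'}$-class, contradicting the assumption. The injective branch, combined with the symmetric analysis on the remaining coordinate slices and the inductive hypothesis, forces $R'$ to carry a ``regular'' combinatorial structure on a cofinite subset of $A$; from this one canonically extracts an auxiliary function $A\to A$ that, by the rigidity dichotomy applied once more, would have to be nearly constant or nearly the identity, yet the former is excluded by the assumed regularity and the latter by $R'\subseteq A^{(k+1)}$ (which forbids diagonal tuples), producing the desired contradiction.
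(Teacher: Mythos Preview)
Your reduction to the injective stratum $R'\subseteq A^{(k+1)}$ is sound, and the case where some $\equiv_{R'}$-class is cofinite is handled cleanly. The constant branch of the slice dichotomy is also true, though your one-line justification is not: from $R'_a=R'_{a'}$ alone you cannot conclude $\pi_{a,a'}R'=R'$, since this requires in addition that $\pi_{a,a'}$ fixes the common slice $Q$ and the finitely many exceptional slices $R'_b$ for $b\notin C$. This does follow once you enlarge the forbidden set to include the inductive supports $E_Q$ and $E_{R'_b}$, but that step is precisely what needs to be said.

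The genuine gap is the injective branch. Your sketch---``forces $R'$ to carry a `regular' combinatorial structure \dots\ from this one canonically extracts an auxiliary function $A\to A$''---does not specify what structure or what function, and the concluding dichotomy you invoke yields ``nearly constant or nearly injective'', not ``nearly constant or nearly the identity''; the appeal to $R'\subseteq A^{(k+1)}$ to exclude the latter has no evident meaning for a function $A\to A$ that you have not defined. Note that for $R'=A^{(k+1)}$ every slice map $a\mapsto R'_a$ is already injective, so injectivity of slices by itself carries no contradiction; you must somewhere use the standing hypothesis that all $\equiv_{R'}$-classes are finite, and your outline never makes contact with it in this branch. Compare the paper's argument: rather than a dichotomy on one slice, it iterates the inductive support map $a\mapsto E_{R_a}$ to form the closures $F_b=\bigcup_n F_{b,n}$, uses amorphousness to stabilise $|F_b|$ on a cofinite set $B$, and then applies strict amorphousness to the partition $\{F_b\cap B:b\in B\}$ to obtain a cofinite set on which $E_{R_a}\subseteq(A\setminus B)\cup\{a\}$. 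That is the missing mechanism that pins the support of $R'_a$ down to~$\{a\}$ modulo a fixed finite set; your injective branch needs an argument of comparable force.
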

\begin{proof}
We define $E_R$ for each $R\subseteq A^k$ by recursion on $k$ as follows.

Let $R\subseteq A$. Then $R$ is finite or cofinite since $A$ is amorphous. Define
\[
E_R=
\begin{cases}
R            & \text{if $R$ is finite,}\\
A\setminus R & \text{otherwise.}
\end{cases}
\]
Clearly, if $E_R=\{e_1,\dots,e_n\}$, then the quantifier-free $\mathscr{L}_{\doteq}^{E_R}$-formula
\[
\phi(x)=
\begin{cases}
\phantom{\neg(}x\doteq\dot{e}_1\vee\dots\vee x\doteq\dot{e}_n & \text{if $R$ is finite,}\\
\neg(x\doteq\dot{e}_1\vee\dots\vee x\doteq\dot{e}_n)          & \text{otherwise,}
\end{cases}
\]
satisfies $R=\{a\in A\mid A\models\phi[a]\}$.

Let $k$ be a positive integer and let $R\subseteq A^{k+1}$. For each $a\in A$, let
\[
R_a=\{\langle a_1,\dots,a_k\rangle\in A^k\mid\langle a,a_1,\dots,a_k\rangle\in R\}.
\]
By the induction hypothesis, for each $a\in A$, the set $E_{R_a}$ is defined so that the required condition holds.
For each $b\in A$, define by recursion
\begin{align*}
F_{b,0}   & =\{b\}, \\
F_{b,n+1} & =F_{b,n}\cup\bigcup_{a\in F_{b,n}}E_{R_a},\\
F_b       & =\bigcup_{n\in\omega}F_{b,n}.
\end{align*}
Clearly, each $F_{b,n}$ is finite. Since $A$ is amorphous, there are no surjections from $A$ onto $\omega$,
and hence there exists an $l\in\omega$ such that $F_b=F_{b,l}$ is finite.
Again, since $A$ is amorphous, there is a unique $m\in\omega$ such that the set
\[
B=\{b\in A\mid|F_b|=m\}
\]
is cofinite.

Consider
\[
\pi=\{F_b\cap B\mid b\in B\}\cup\{\{b\}\mid b\in A\setminus B\}.
\]
We claim that $\pi$ is a partition of $A$. Let $b,c\in B$ be such that $(F_b\cap B)\cap(F_c\cap B)\neq\varnothing$.
Let $d\in F_b\cap F_c\cap B$. An easy induction shows that $F_{d,n}\subseteq F_b\cap F_c$ for every $n\in\omega$, so $F_d\subseteq F_b\cap F_c$.
Since $b,c,d\in B$, $|F_b|=|F_c|=|F_d|=m$, and hence $F_b=F_c=F_d$, which implies that $F_b\cap B=F_c\cap B$.

Since $A$ is strictly amorphous, all but finitely many blocks of $\pi$ are singletons. Let
\[
C=\{b\in B\mid F_b\cap B\text{ is a singleton}\}.
\]
Then $C$ is cofinite, and for every $a\in C$, $F_a\cap B=\{a\}$, so $E_{R_a}\subseteq(A\setminus B)\cup\{a\}$,
which implies that there exists a quantifier-free $\mathscr{L}_{\doteq}^{A\setminus B}$-formula $\psi(x,x_1,\dots,x_k)$ such that
\[
R_a=\bigl\{\langle a_1,\dots,a_k\rangle\in A^k\bigm|A\models\psi[a,a_1,\dots,a_k]\bigr\};
\]
let $\Gamma_a$ be the set of all such $\psi$. Let $\sim$ be the equivalence relation on $C$ defined by
\[
a\sim b\qquad\text{if and only if}\qquad\Gamma_a=\Gamma_b.
\]
Since there are only finitely many inequivalent quantifier-free $\mathscr{L}_{\doteq}^{A\setminus B}$-formulas
with free variables among $x,x_1,\dots,x_k$, it follows that there are only finitely many $\sim$-equivalence classes.
Since $A$ is amorphous, there is a unique $\sim$-equivalence class $D$ which is cofinite. Finally, we define
\[
E_R=\bigcup_{b\in A\setminus D}F_b.
\]

We conclude the proof by showing that $E_R$ has the required properties. Suppose that $E_R=\{e_1,\dots,e_n\}$.
For every $i=1,\dots,n$, since $E_{R_{e_i}}\subseteq E_R$, by the induction hypothesis,
there exists a quantifier-free $\mathscr{L}_{\doteq}^{E_R}$-formula $\phi_i(x_1,\dots,x_k)$ such that
\[
R_{e_i}=\bigl\{\langle a_1,\dots,a_k\rangle\in A^k\bigm|A\models\phi_i[a_1,\dots,a_k]\bigr\}.
\]
Since $A\setminus B\subseteq A\setminus D\subseteq E_R$ and $D$ is a $\sim$-equivalence class,
it follows that there exists a quantifier-free $\mathscr{L}_{\doteq}^{E_R}$-formula $\psi(x,x_1,\dots,x_k)$
such that for every $a\in A\setminus E_R$,
\[
R_a=\bigl\{\langle a_1,\dots,a_k\rangle\in A^k\bigm|A\models\psi[a,a_1,\dots,a_k]\bigr\}.
\]
Hence, the quantifier-free $\mathscr{L}_{\doteq}^{E_R}$-formula
\[
\phi(x,x_1,\dots,x_k)=\bigvee_{i=1}^n(x\doteq\dot{e}_i\wedge\phi_i)\vee\Biggl(\bigwedge_{i=1}^n\neg(x\doteq\dot{e}_i)\wedge\psi\Biggr)
\]
satisfies
\[
R=\bigl\{\langle a,a_1,\dots,a_k\rangle\in A^{k+1}\bigm|A\models\phi[a,a_1,\dots,a_k]\bigr\}.\qedhere
\]
\end{proof}

\begin{theorem}\label{sh32}
A set $A$ is strictly amorphous if and only if it is strongly amorphous.
\end{theorem}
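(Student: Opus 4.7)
The plan is to prove both implications of the equivalence, noting that the heavy lifting has already been done in Lemma~\ref{sh31}. For the implication from strictly amorphous to strongly amorphous, Lemma~\ref{sh31} shows that on a strictly amorphous $A$, any $k$-ary relation $R\subseteq A^k$ is defined by a quantifier-free $\mathscr{L}_{\doteq}^{E_R}$-formula for some finite $E_R\subseteq A$. Such a formula is \emph{a fortiori} a first-order definition from finitely many parameters in the language of equality, which is precisely the defining condition of strong amorphousness. So this direction requires only a citation of Lemma~\ref{sh31}.

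For the converse, I would argue directly by a symmetry/orbit argument from the definition of strong amorphousness (this is the direction already remarked to be ``easy'' in the introduction). First, strong amorphousness implies amorphousness: given $B\subseteq A$, let $\phi(x)$ with parameters $e_1,\dots,e_n$ define $B$; then $B$ is invariant under every permutation of $A$ fixing each $e_i$, and since any two elements of $A\setminus\{e_1,\dots,e_n\}$ can be interchanged by such a permutation, $B$ contains all or none of $A\setminus\{e_1,\dots,e_n\}$ and is thus finite or cofinite. Next, given any partition $\pi$ of $A$, regard it as a binary equivalence relation $E\subseteq A^2$; by strong amorphousness, $E$ is definable from some finite parameter set $\{e_1,\dots,e_n\}$, so it is preserved by every permutation of $A$ fixing these parameters. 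Such permutations act transitively on pairs of distinct elements of $A_0:=A\setminus\{e_1,\dots,e_n\}$, so either every such pair lies in $E$ or none does. In the first case all of $A_0$ lies in a single block of $\pi$, giving at most $n+1$ blocks; in the second, distinct elements of $A_0$ are pairwise inequivalent, so every non-singleton block must contain at least one parameter, giving at most $n$ non-singleton blocks. Either way $\pi$ has only finitely many non-singleton blocks, so $A$ is strictly amorphous.

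I do not anticipate a substantive obstacle: one direction is a citation of Lemma~\ref{sh31}, and the other is a short orbit argument. The only minor points requiring care are the transitivity claims for single elements and for pairs of distinct elements of $A_0$ under the parameter-fixing group, both of which follow immediately because $A_0$ is infinite and suitable products of transpositions of $A$ that fix the parameters realize the required maps.
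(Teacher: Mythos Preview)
Your proposal is correct and matches the paper's proof essentially line for line: the forward direction is an immediate citation of Lemma~\ref{sh31}, and for the converse the paper likewise first deduces amorphousness from definability of subsets and then analyzes the equivalence relation induced by a partition $\pi$ over the parameter set, concluding that either all of $A\setminus E$ lies in one block or each of its elements is a singleton block. The only cosmetic difference is that you phrase the converse via an explicit permutation/orbit argument on pairs, whereas the paper leaves implicit the fact that a quantifier-free $\mathscr{L}_{\doteq}^E$-formula cannot distinguish pairs of distinct elements outside $E$; these are the same observation.
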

\begin{proof}
Suppose that $A$ is strictly amorphous. By Lemma~\ref{sh31}, for every positive integer $k$ and every $k$-ary relation $R$ on $A$,
there is a quantifier-free $\mathscr{L}_{\doteq}$-formula $\phi(x_1,\dots,x_k,y_1,\dots,y_n)$ such that
\[
R=\bigl\{\langle a_1,\dots,a_k\rangle\in A^k\bigm|A\models\phi[a_1,\dots,a_k,e_1,\dots,e_n]\bigr\}
\]
for some $e_1,\dots,e_n\in A$. Hence, $A$ is strongly amorphous.

For the other direction, suppose that $A$ is strongly amorphous.
Every subset of $A$ is first-order definable from finitely many parameters in $\mathscr{L}_{\doteq}$,
and hence is either finite or cofinite. Therefore, $A$ is amorphous. Let $\pi$ be an arbitrary partition of $A$.
Since the equivalence relation induced by $\pi$ is first-order definable from a finite set $E$ of parameters in $\mathscr{L}_{\doteq}$,
it follows that either all elements of $A\setminus E$ lie in a single block of $\pi$,
or each element of $A\setminus E$ forms a singleton block of $\pi$.
Hence, all but finitely many blocks of $\pi$ are singletons. Thus, $A$ is strictly amorphous.
\end{proof}

\section{Strictly amorphous sets and dual Dedekind finiteness}
In this section, we show that every finite power of the power set of a strictly amorphous set is dually Dedekind finite.
A set $A$ is \emph{power Dedekind finite} if the power set of $A$ is Dedekind finite; otherwise, $A$ is \emph{power Dedekind infinite}.
According to Kuratowski's celebrated theorem (see~\cite[Proposition~5.3]{Halbeisen2025}),
a set $A$ is power Dedekind infinite if and only if there exists a surjection from $A$ onto $\omega$.
The class of power Dedekind finite sets is closed under finite products (see~\cite[Theorem~1(vi)]{Truss1974}).
Clearly, every amorphous set is power Dedekind finite, and so is every finite power of an amorphous set.

\begin{lemma}\label{sh41}
Let $n\in\omega$. For each $i<n$, let $e_i\in\omega$ and let $X_i=\bigcup_{l\in\omega}X_{i,l}$, where $X_{i,l}$ is
power Dedekind finite for all $l\in\omega$. If $X=\prod_{i<n}X_i^{e_i}$ is dually Dedekind infinite and $o\notin X$,
then there exist a surjection $g:X\to\{o\}\cup X$ and functions $h,p_{i,j}:\omega\to\omega$ \textup{($i<n,j<e_i$)} such that
\begin{enumerate}[label=\upshape(\arabic*)]
  \item $h$ is increasing,
  \item every $p_{i,j}$ is monotonic, and at least one of $p_{i,j}$ is unbounded,
  \item for every $k\in\omega$, $g^{-h(k)-1}[\{o\}]\subseteq\prod_{i<n}\prod_{j<e_i}X_{i,p_{i,j}(k)}$.
\end{enumerate}
\end{lemma}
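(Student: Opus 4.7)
The plan is to build the surjection $g$ from a non-injective surjection $f:X\to X$ given by the dual Dedekind infiniteness of $X$, and to read off $h$ and the $p_{i,j}$ from the way iterated preimages of $o$ under $g$ sit inside the filtration.

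First I would assume without loss that the filtration is cumulative, i.e., $X_{i,l}\subseteq X_{i,l+1}$, by replacing each $X_{i,l}$ with $\bigcup_{m\leqslant l}X_{i,m}$; this remains power Dedekind finite since finite unions of power Dedekind finite sets are power Dedekind finite. I would then establish the key \emph{box lemma}: every power Dedekind finite $Y\subseteq X$ is contained in some box $\prod_{i<n}\prod_{j<e_i}X_{i,l_{i,j}}$. Each coordinate projection $\pi_{i,j}[Y]\subseteq X_i$ is the image of a power Dedekind finite set, hence power Dedekind finite, so the map $z\mapsto\min\{l:z\in X_{i,l}\}$ on $\pi_{i,j}[Y]$ must have bounded range (otherwise it would surject onto an infinite subset of $\omega$), placing $\pi_{i,j}[Y]$ inside $X_{i,l_{i,j}}$ for some $l_{i,j}$.

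For the construction itself, fix a non-injective surjection $f:X\to X$ and distinct $x_0,y_0\in X$ with $f(x_0)=f(y_0)$, and define
\[
g(z)=\begin{cases} o & \text{if }z=y_0,\\ f(z) & \text{otherwise.}\end{cases}
\]
A direct check shows $g$ is a surjection onto $\{o\}\cup X$, and since $g^{-1}[Y]=f^{-1}[Y]\setminus\{y_0\}$ for any $Y\subseteq X$, induction gives $g^{-k-1}[\{o\}]\subseteq f^{-k}[\{y_0\}]$. To finish I would find a strictly increasing $h$ along which $g^{-h(k)-1}[\{o\}]$ is power Dedekind finite, apply the box lemma to obtain levels $l_{i,j}(k)$ with $g^{-h(k)-1}[\{o\}]\subseteq\prod_{i<n}\prod_{j<e_i}X_{i,l_{i,j}(k)}$, and replace each $p_{i,j}(k)$ by $\max_{k'\leqslant k}l_{i,j}(k')$ to make it monotonic. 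To secure unboundedness of at least one $p_{i,j}$, I would if necessary inflate one of them by setting $p_{i,j}(k):=\max(p_{i,j}(k),k)$; this preserves monotonicity and the containment, since enlarging the box cannot remove anything already inside.

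The main obstacle I expect is arranging $g^{-h(k)-1}[\{o\}]$ to be power Dedekind finite at a cofinal set of depths. Iterated preimages of a single point under $f$ are not power Dedekind finite in general, so this step likely requires either choosing $f$ and $y_0$ carefully in view of the filtration, or a case analysis according to whether $X$ is itself power Dedekind finite (where the preimage tree is automatically trapped inside one box), Dedekind infinite (where $g$ can be taken nearly bijective, making each $g^{-k-1}[\{o\}]$ a singleton and growth of the $p_{i,j}$ immediate), or power Dedekind infinite but Dedekind finite (the subtlest case, where the filtration must be used to prune iterated preimages to power Dedekind finite subsets while keeping $g$ surjective). Handling this last case cleanly is where the technical heart of the argument lies.
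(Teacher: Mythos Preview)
The paper does not prove this lemma itself; it quotes \cite[Lemma~2.4]{Mao2025}. So there is no in-paper argument to compare against, but your sketch has two genuine gaps.

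First, the ``without loss of generality cumulative'' step is not valid. The sets $X_{i,l}$ are part of the data and conclusion~(3) is stated for them, not for $\bigcup_{m\leqslant l}X_{i,m}$. In the paper's sole application (Corollary~\ref{sh42}) one takes $X_{i,l}=[A]^l$, which are pairwise disjoint, and the proof of Theorem~\ref{sh43} uses~(3) to conclude that $|T_{j,0}|=p_0(m_j)$ exactly, hence that the $T_{j,0}$ are pairwise distinct; the cumulative conclusion $|T_{j,0}|\leqslant p_0(m_j)$ would not suffice. Your box lemma is only true after passing to the cumulative filtration (for disjoint levels, a two-element set already fails to sit in a single $X_{i,l}$), and your inflation trick $p_{i,j}(k)\mapsto\max(p_{i,j}(k),k)$ likewise preserves~(3) only in the cumulative setting, so you cannot manufacture the unboundedness in~(2) this way---it has to come out of the construction. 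Second, even granting all of that, you explicitly leave the main case ($X$ power Dedekind infinite but Dedekind finite) unresolved, and the work needed there---building $g$ so that along a cofinal sequence of depths the preimage of $o$ lives in a single level of each coordinate, with at least one coordinate's level strictly growing---is precisely the content of the lemma.
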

\begin{proof}
See~\cite[Lemma~2.4]{Mao2025}.
\end{proof}

We write $[A]^k$ for the set of $k$-element subsets of $A$.
For a tuple $\bar{a}=\langle a_1,\dots,a_k\rangle$, we write $\{\bar{a}\}$ for the set $\{a_1,\dots,a_k\}$.

\begin{corollary}\label{sh42}
Let $A$ be a power Dedekind finite set and let $n\in\omega$. If $\fin(A)^n$ is dually Dedekind infinite,
then there exist a surjection $g:\fin(A)^n\to\{\varnothing\}\cup\fin(A)^n$ and functions $h,p_i:\omega\to\omega$ \textup{($i<n$)} such that
\begin{enumerate}[label=\upshape(\arabic*)]
  \item $h$ is increasing,
  \item every $p_i$ is monotonic, and at least one of $p_i$ is unbounded,
  \item for every $m\in\omega$, $g^{-h(m)-1}[\{\varnothing\}]\subseteq\prod_{i<n}[A]^{p_i(m)}$.
\end{enumerate}
\end{corollary}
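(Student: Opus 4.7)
The proof is essentially a direct specialization of Lemma~\ref{sh41}, with one verification needed up front. I would take $e_i=1$ and $X_i=\fin(A)=\bigcup_{l\in\omega}[A]^l$ for each $i<n$, so that the product $\prod_{i<n}X_i^{e_i}$ becomes $\fin(A)^n$, with the decomposition $X_{i,l}=[A]^l$.

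The only non-routine step is to check that each $[A]^l$ is power Dedekind finite, since the lemma requires this of every $X_{i,l}$. I would argue in three stages: first, finite products of power Dedekind finite sets are power Dedekind finite (Truss, as recalled at the start of the section), so $A^l$ is power Dedekind finite; second, the map $(a_1,\dots,a_l)\mapsto\{a_1,\dots,a_l\}$ from $A^l$ onto $\bigcup_{k\leqslant l}[A]^k$ is surjective, and a surjective image of a power Dedekind finite set is power Dedekind finite (any surjection onto $\omega$ from the image would compose to a surjection from the domain onto $\omega$, contradicting Kuratowski's theorem); third, subsets of power Dedekind finite sets are power Dedekind finite (any surjection from the subset onto $\omega$ extends by a constant value on the complement). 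Hence $[A]^l$ is power Dedekind finite.

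With this in place, Lemma~\ref{sh41} applies directly, with $o$ chosen to be any element fresh to $\fin(A)^n$ and subsequently identified with the symbol $\varnothing$ of the corollary's statement. The lemma yields a surjection $g$, an increasing function $h$, and monotonic functions $p_{i,0}$ for $i<n$ (only $j=0$ appears since $e_i=1$), at least one of which is unbounded. Relabelling $p_{i,0}$ as $p_i$, the double product $\prod_{i<n}\prod_{j<e_i}X_{i,p_{i,j}(k)}$ in clause~(3) of the lemma collapses to $\prod_{i<n}[A]^{p_i(k)}$, matching the corollary verbatim.

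The main obstacle, such as it is, lies entirely in the power Dedekind finiteness check for $[A]^l$; the remainder is bookkeeping around the indices $e_i$ and $j$, and a cosmetic renaming of the fresh element $o$ as $\varnothing$.
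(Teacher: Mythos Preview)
Your proposal is correct and matches the paper's approach exactly: the paper's proof is the single sentence ``In Lemma~\ref{sh41}, take $o=\varnothing$, $e_i=1$, $X_i=\fin(A)$, and $X_{i,l}=[A]^l$ for all $i<n$ and all $l\in\omega$.'' Your additional verification that each $[A]^l$ is power Dedekind finite is a detail the paper leaves implicit (it follows immediately from the closure under finite products noted just before Lemma~\ref{sh41}, since $[A]^l$ embeds into $A^l$), and your handling of $o$ versus $\varnothing$ is a harmless elaboration of the same choice.
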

\begin{proof}
In Lemma~\ref{sh41}, take $o=\varnothing$, $e_i=1$, $X_i=\fin(A)$, and $X_{i,l}=[A]^l$ for all $i<n$ and all $l\in\omega$.
\end{proof}

\begin{theorem}\label{sh43}
For all strictly amorphous sets $A$ and all natural numbers $n$,
$\fin(A)^n$, and consequently $\mathscr{P}(A)^n$, is dually Dedekind finite.
\end{theorem}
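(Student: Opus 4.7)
My plan is to argue by contradiction, extending the pattern of Goldstern's proof for $n=1$. First I would suppose $\fin(A)^n$ is dually Dedekind infinite and apply Corollary~\ref{sh42} to obtain a surjection $g\colon\fin(A)^n\to\{\varnothing\}\cup\fin(A)^n$ together with $h,p_i\colon\omega\to\omega$ such that $h$ is increasing, each $p_i$ is monotonic, some $p_{i_0}$ is unbounded, and $g^{-h(m)-1}[\{\varnothing\}]\subseteq\prod_{i<n}[A]^{p_i(m)}$ for every $m\in\omega$. By Theorem~\ref{sh32}, $A$ is strongly amorphous.

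Next I would show $g$ is supported by a finite $E\subseteq A$, meaning every $\pi\in\mathrm{Sym}(A)$ fixing $E$ pointwise satisfies $g(\pi\vec x)=\pi g(\vec x)$. For each input size tuple $\vec k\in\omega^n$ and each possible output size tuple $\vec l\in\omega^n$ (and separately for each $\vec k$ with output $\varnothing$), the appropriately restricted graph of $g$ encodes a relation on $A$ of finite arity, so Lemma~\ref{sh31} furnishes a finite support $E_{\vec k,\vec l}\subseteq A$. The union $E$ of these countably many supports is a countable union of finite subsets of the amorphous set $A$; this forces it to be finite, because if it were cofinite, assigning to each $a$ the least index (under some fixed enumeration) at which $a$ first appears would yield a map $A\to\omega$ whose image must be infinite, producing a surjection $A\to\omega$ and contradicting the fact that every amorphous set is power Dedekind finite.

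Passing to the quotient, $g$ descends to a surjection $\bar g\colon\mathcal{O}\to\{\varnothing\}\cup\mathcal{O}$, where $\mathcal{O}$ denotes the set of $\mathrm{Sym}(A;E)$-orbits on $\fin(A)^n$. Each orbit has the form $O_{\vec F,\vec n}$, indexed by an $E$-trace $\vec F\in\mathscr{P}(E)^n$ and an intersection-pattern vector $\vec n=(n_\alpha)_{\alpha\in\{0,1\}^n\setminus\{\vec 0\}}$, and is a singleton precisely when $\vec n=\vec 0$. By equivariance combined with the finiteness of $g(\vec x)$, the restriction $g|_{O_{\vec F,\vec n}}$ must take one of the following rigid forms: a substitution-style map to an orbit $O_{\vec H,\vec m}$ (determined by a target $E$-trace $\vec H$ together with a function $\phi\colon\{0,1\}^n\to\{0,1\}^n$ with $\phi(\vec 0)=\vec 0$, so that $m_\beta=\sum_{\phi(\alpha)=\beta,\,\alpha\ne\vec 0}n_\alpha$), a constant map to a singleton orbit $\{\vec H\}$, or a constant map to $\varnothing$. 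I would then attempt a Goldstern-style counting argument: for each fixed non-trivial pattern $\vec n$, show that the $2^{n|E|}$ source orbits $O_{\vec F,\vec n}$ are exactly as many as the non-singleton targets they must cover via substitution, forcing every non-singleton orbit to be of substitution type. This would leave the $2^{n|E|}+1$ elements consisting of the singleton orbits and $\varnothing$ to be hit only by $g$ restricted to $\mathscr{P}(E)^n$, a map from a $2^{n|E|}$-element set that cannot possibly be surjective onto a larger set --- the desired contradiction.

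The main obstacle is the counting step for $n\geq2$. When $n=1$, equivariant substitution preserves the single free-size parameter $k-|F|$, so at each free size the $2^{|E|}$ sources and $2^{|E|}$ non-singleton targets match up exactly, leaving no room for constant maps. For $n\geq2$, the pattern vector $\vec n$ can be permuted, merged, or partially eliminated by $\phi$, so source orbits at different patterns compete for targets at any given pattern and the one-to-one matching is no longer automatic. The hard part will be isolating the right combinatorial invariant --- perhaps the multiset of active pattern values, or a quantity driven by the unbounded growth of $p_{i_0}(m)$ from Corollary~\ref{sh42} --- that once again forces the matching to be tight at every level, precluding any constant-to-singleton or constant-to-$\varnothing$ behaviour on non-singleton orbits.
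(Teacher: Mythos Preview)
Your setup agrees with the paper: apply Corollary~\ref{sh42}, encode finite-arity slices of (iterates of) $g$ as relations on $A$, invoke Lemma~\ref{sh31} to support each slice by a finite set, and take the union to get a single finite support $E$. (A minor point: the paper supports not just $g$ but each iterate $g^m$; since $E$ supports $g$ it supports every $g^m$, so this is harmless.)

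Where you diverge is the endgame. You pass to the orbit space $\mathcal{O}$ and attempt a global matching argument between source and target orbits, and you correctly diagnose that for $n\geqslant2$ the substitution maps $\phi$ can merge or permute pattern coordinates, so the na\"ive level-by-level counting from Goldstern's $n=1$ case breaks down. The paper does not resolve this obstacle; it avoids it entirely with a much more local argument. Using surjectivity of $g$, pull back a single chain
\[
\bar T_0\ \stackrel{g^{h(m_1)-h(m_0)}}{\longleftarrow}\ \bar T_1\ \longleftarrow\ \cdots\ \longleftarrow\ \bar T_N=\bar S,
\qquad g^{h(m_0)+1}(\bar T_0)=\varnothing,
\]
along a subsequence $(m_j)$ with $p_{i_0}(m_j)$ strictly increasing, of length $N=2^{2^{n+|E|}}$. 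Now consider only the final tuple $\bar S=\langle S_0,\dots,S_{n-1}\rangle$ and the equivalence relation $a\sim b$ on $A$ given by ``$a=b\in E$, or $a,b\notin E$ and $\forall i<n\,(a\in S_i\leftrightarrow b\in S_i)$''. There are fewer than $2^{n+|E|}$ finite $\sim$-classes, hence fewer than $N$ finite unions of $\sim$-classes. Since the $i_0$th coordinates $T_{j,i_0}$ have $N$ pairwise distinct sizes, some $T_{r,i_0}$ is not such a union: pick $c\sim d$ with $c\in T_{r,i_0}$, $d\notin T_{r,i_0}$. The transposition $(c\ d)$ fixes $E$ pointwise and fixes each $S_i$ setwise, so by $E$-equivariance $g^{h(m_N)-h(m_r)}(\bar S)=(c\ d)\cdot\bar T_r\neq\bar T_r$, a contradiction.

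In short, the missing idea in your plan is to trade the orbit-counting for a pigeonhole on a long chain of preimages: the unboundedness of $p_{i_0}$ you already have from Corollary~\ref{sh42} is exactly what drives it.
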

\begin{proof}
Since $A$ is amorphous, $\mathscr{P}(A)$ is equinumerous to $2\times\fin(A)$,
which implies that $\mathscr{P}(A)^n$ is equinumerous to $2^n\times\fin(A)^n$.
Since the class of dually Dedekind finite sets is closed under finite unions (see~\cite[Theorem~1(iv)]{Truss1974}),
it follows that $\fin(A)^n$ is dually Dedekind finite if and only if $\mathscr{P}(A)^n$ is.
We prove that $\fin(A)^n$ is dually Dedekind finite as follows.

We assume the contrary and aim for a contradiction.
By Corollary~\ref{sh42}, there exist a surjection $g:\fin(A)^n\to\{\varnothing\}\cup\fin(A)^n$
and functions $h,p_i:\omega\to\omega$ ($i<n$) such that
\begin{enumerate}[label=\upshape(\arabic*)]
  \item $h$ is increasing,
  \item every $p_i$ is monotonic, and at least one of $p_i$ is unbounded,
  \item for every $m\in\omega$, $g^{-h(m)-1}[\{\varnothing\}]\subseteq\prod_{i<n}[A]^{p_i(m)}$.
\end{enumerate}
Without loss of generality, assume that $p_0$ is unbounded.

For each $\bar{k}=\langle k_0,\dots,k_{n-1}\rangle,\bar{l}=\langle l_0,\dots,l_{n-1}\rangle\in\omega^n\setminus\{\bar{0}\}$
and each $m\in\omega\setminus\{0\}$, let
\[
R_{\bar{k},\bar{l},m}=\left\{\langle\bar{a}_0,\dots,\bar{a}_{n-1},\bar{b}_0,\dots,\bar{b}_{n-1}\rangle\middle|
\begin{array}{l}
\bar{a}_i\in A^{k_i}\text{ and }\bar{b}_i\in A^{l_i}\text{ for every $i<n$, and}\\
g^m(\{\bar{a}_0\},\dots,\{\bar{a}_{n-1}\})=\langle\{\bar{b}_0\},\dots,\{\bar{b}_{n-1}\}\rangle
\end{array}
\right\};
\]
by Lemma~\ref{sh31}, we can define $E_{\bar{k},\bar{l},m}\in\fin(A)$ such that for some
$\mathscr{L}_{\doteq}^{E_{\bar{k},\bar{l},m}}$-formula $\phi(\bar{x}_0,\dots,\bar{x}_{n-1},\bar{y}_0,\dots,\bar{y}_{n-1})$ we have
\begin{equation}\label{sh44}
R_{\bar{k},\bar{l},m}=\bigl\{\langle\bar{a}_0,\dots,\bar{a}_{n-1},\bar{b}_0,\dots,\bar{b}_{n-1}\rangle
\bigm|A\models\phi[\bar{a}_0,\dots,\bar{a}_{n-1},\bar{b}_0,\dots,\bar{b}_{n-1}]\bigr\}.
\end{equation}
Note that $\{E_{\bar{k},\bar{l},m}\mid\bar{k},\bar{l}\in\omega^n\setminus\{\bar{0}\},m\in\omega\setminus\{0\}\}$
is a countable subset of $\fin(A)$, and hence it is finite, since $\fin(A)$ is Dedekind finite. Let
\[
E=\bigcup\{E_{\bar{k},\bar{l},m}\mid\bar{k},\bar{l}\in\omega^n\setminus\{\bar{0}\},m\in\omega\setminus\{0\}\}.
\]
Then $E$ is a finite subset of $A$ such that, for each $\bar{k},\bar{l}\in\omega^n\setminus\{\bar{0}\}$ and each $m\in\omega\setminus\{0\}$,
there exists an $\mathscr{L}_{\doteq}^E$-formula $\phi(\bar{x}_0,\dots,\bar{x}_{n-1},\bar{y}_0,\dots,\bar{y}_{n-1})$ that satisfies \eqref{sh44}.

Since $p_0$ is unbounded, there exists an increasing sequence $\langle m_j\rangle_{j\in\omega}$
such that $0<p_0(m_j)<p_0(m_{j+1})$ for all $j\in\omega$.
Since $g$ is surjective, there exists a finite sequence $\langle\bar{T}_j\rangle_{j\leqslant2^{2^{n+|E|}}}$
of elements of $\fin(A)^n$ such that, for all $j<2^{2^{n+|E|}}$,
\begin{align*}
g^{h(m_0)+1}(\bar{T}_0)              & =\varnothing, \\
g^{h(m_{j+1})-h(m_j)}(\bar{T}_{j+1}) & =\bar{T}_j.
\end{align*}
Since $g^{-h(m)-1}[\{\varnothing\}]\subseteq\prod_{i<n}[A]^{p_i(m)}$ for all $m\in\omega$,
we have $\bar{T}_j\in\prod_{i<n}[A]^{p_i(m_j)}$ for all $j\leqslant2^{2^{n+|E|}}$.

Let $\bar{S}=\bar{T}_{2^{2^{n+|E|}}}=\langle S_0,\dots,S_{n-1}\rangle$,
and let $\sim$ be the equivalence relation on $A$ defined by
\[
a\sim b\iff\text{either }a=b\in E\text{ or }a,b\notin E\text{ and }\forall i<n(a\in S_i\leftrightarrow b\in S_i).
\]
Clearly, the number of finite equivalence classes of $\sim$ is less than $2^{n+|E|}$.
For each $j<2^{2^{n+|E|}}$, let $\bar{T}_j=\langle T_{j,0},\dots,T_{j,n-1}\rangle$.
Since $|T_{j,0}|=p_0(m_j)$ and $\langle p_0(m_j)\rangle_{j\in\omega}$ is increasing,
it follows that there exists an $r<2^{2^{n+|E|}}$ such that $T_{r,0}$ is not a union of equivalence classes of $\sim$;
that is, there are $c,d\in A$ such that $c\sim d$, $c\in T_{r,0}$ and $d\notin T_{r,0}$. Clearly, $c,d\notin E$.
Let $\pi$ be the transposition that interchanges $c$ and $d$.
Then $\pi$ fixes $E$ pointwise, $\pi[S_i]=S_i$ for all $i<n$, and $\pi[T_{r,0}]\neq T_{r,0}$.

For each $i<n$, let $k_i=|S_i|$ and $l_i=|T_{r,i}|$, and let $m=h(m_{2^{2^{n+|E|}}})-h(m_r)$.
Then $g^m(\bar{S})=\bar{T}_r$. For each $i<n$, let $\bar{a}_i\in A^{k_i}$ and $\bar{b}_i\in A^{l_i}$
be such that $S_i=\{\bar{a}_i\}$ and $T_{r,i}=\{\bar{b}_i\}$, respectively. Then
\[
\langle\bar{a}_0,\dots,\bar{a}_{n-1},\bar{b}_0,\dots,\bar{b}_{n-1}\rangle\in R_{\bar{k},\bar{l},m}.
\]
Since isomorphisms preserve all formulas, it follows from \eqref{sh44} that
\[
\langle\pi\bar{a}_0,\dots,\pi\bar{a}_{n-1},\pi\bar{b}_0,\dots,\pi\bar{b}_{n-1}\rangle\in R_{\bar{k},\bar{l},m}.
\]
Hence, $g^m(\bar{S})=\langle\pi[T_{r,0}],\dots,\pi[T_{r,n-1}]\rangle$, contradicting $\pi[T_{r,0}]\neq T_{r,0}$.
\end{proof}

\section{Open questions}
We conclude the article with the following two questions.
First, we do not know whether the converse of Theorem~\ref{sh26} holds.

\begin{question}
Does $\mathsf{ZF}$ prove that every amorphous set having a dually Dedekind infinite power set is of projective type?
\end{question}

Second, we wonder whether the consistency result in \cite{Mao2025} can be generalized to the power sets of amorphous sets.

\begin{question}
Is it consistent with $\mathsf{ZF}$ that there exists a family $\langle A_n \rangle_{n \in \omega}$ of amorphous sets such that,
for all $n\in\omega$, $\mathscr{P}(A_n)^n$ is dually Dedekind finite, whereas $\mathscr{P}(A_n)^{n+1}$ is dually Dedekind infinite?
\end{question}

\end{document}